\newtheorem{thm}{Theorem}[section]
\newtheorem{thmx}{Theorem}
\newtheorem{prop}[thm]{Proposition}
\newtheorem{cor}[thm]{Corollary}
\newtheorem{lem}[thm]{Lemma}
\theoremstyle{definition}
\newtheorem{deff}[thm]{Definition}
\newcommand{\sheaf}[1]{\mathcal{#1}}
\renewcommand{\O}{\sheaf{O}}
\newcommand{\PP}{\mathbb{P}} % projective space
\newcommand{\into}{\hookrightarrow} % injection
\newcommand{\iso}{\cong} % isomorphism
\newcommand{\st}{\;\vline\;} % 'such that' set notation bar
\newcommand{\res}[2]{\left.#1\right|_{#2}} % restriction bar
\DeclareMathOperator{\Jac}{Jac} % Jacobian
\DeclareMathOperator{\Hilb}{Hilb} % Hilbert scheme
\newcommand{\ppav}{\textsc{ppav}} % abbrev. for principally polarized...
\title{Schottky via the punctual Hilbert scheme}
\author[M. G. Gulbrandsen]{Martin G. Gulbrandsen}
\address{University of Stavanger, Department of Mathematics and Natural Sciences, NO-4036 Stavanger, Norway}
\email{martin.gulbrandsen@uis.no}
\author[M. Lahoz]{Mart\'{\i} Lahoz}
\address{ Institut de Math\'ematiques Jussieu -- Paris Rive Gauche.
Universit\'e Paris Diderot -- Paris 7.
B\^{a}timent Sophie-Germain. Case 7012.
F-75205 Paris Cedex 13, France.}
\email{marti.lahoz@imj-prg.fr}
\begin{document}

\begin{abstract}
We show that a smooth projective curve of genus $g$ can be reconstructed from its polarized Jacobian
$(X, \Theta)$ as a certain locus in the Hilbert scheme $\Hilb^d(X)$, for $d=3$ and for $d=g+2$,
defined by geometric conditions in terms of the polarization $\Theta$.  The
result is an application of the Gunning--Welters trisecant criterion and the
Castelnuovo--Schottky theorem by Pareschi--Popa and Grushevsky, and its scheme
theoretic extension by the authors.
\end{abstract}

\maketitle

\section{Introduction}

Let $(X, \Theta)$ be an indecomposable principally polarized abelian variety (\ppav) of
dimension $g$ over an algebraically closed field $k$ of characteristic
different from $2$.
The polarization $\Theta$ is considered as a divisor class under algebraic equivalence,
but for notational convenience, we shall fix a representative $\Theta\subset X$.
$(X, \Theta)$ being indecomposable means that $\Theta$ is irreducible.

The geometric Schottky problem asks for geometric conditions on $(X, \Theta)$ which determine whether
it is isomorphic, as a \ppav, to the Jacobian of a nonsingular genus $g$ curve $C$.
The Torelli theorem then guarantees the uniqueness of the curve $C$ up to isomorphism.
One may ask for a constructive version: can you ``write down'' the curve $C$, starting from
$(X,\Theta)$? Even though one may embed $C$ in its Jacobian $X$, there is no canonical choice
of such an embedding, so one cannot reconstruct $C$ as a curve in $X$ without making some choices along the way.
We refer to Mumford's classic \cite{Mcurves}
for various approaches and answers to the Schottky and Torelli problems,
and also to Beauville \cite{Bschottky} and Debarre \cite{Dschottky} for more recent results.

In this note, we show that any curve $C$ sits naturally inside the punctual Hilbert scheme
of its Jacobian $X$. We give two versions: firstly, using the Gunning--Welters criterion \cite{gunning, Wflexes}, characterizing Jacobians by having many trisecants, we reconstruct $C$ as a locus in $\Hilb^3(X)$. Secondly,
using the Castelnuovo--Schottky theorem, quoted below, we reconstruct $C$ as a locus in $\Hilb^{g+2}(X)$.
In fact, for any indecomposable \ppav{} $(X, \Theta)$, we define a certain locus
in the Hilbert scheme $\Hilb^d(X)$ for $d\ge 3$, 
and show that this locus is 
either empty, or one or two copies of a curve $C$, according to whether $(X, \Theta)$
is not a Jacobian, or the Jacobian of the hyperelliptic or nonhyperelliptic curve $C$.
Then we characterize the locus in question for $d=3$ in terms of trisecants, and for $d=g+2$
in terms of being in special position with respect to $2\Theta$-translates.

To state the results precisely, we introduce some notation.
For any subscheme $V\subset X$, we shall write $V_x\subset X$ for the translate
$V-x$ by $x\in X$.
Let $\psi\colon X \to \PP^{2^{g-1}}$ be the (Kummer) map given by the linear system $|2\Theta|$.

\begin{thmx}\label{thm:schottky-A}
Let $Y\subset \Hilb^3(X)$ be the subset consisting of all subschemes $\Gamma\subset X$
with support $\{0\}$, with the property that
\begin{equation*}
\{x\in X \st \text{$\Gamma_x \subset \psi^{-1}(\ell)$ for some line $\ell\subset \PP^{2^{g-1}}$}\}
\end{equation*}
has positive dimension. Then $Y$ is closed and
\begin{enumerate}
\item if $X$ is not a Jacobian, then $Y=\emptyset$.
\item if $X\cong \Jac(C)$ for a hyperelliptic curve $C$, then
$Y$ is isomorphic to the curve $C$.
\item if $X\cong \Jac(C)$ for a non-hyperelliptic curve $C$, then $Y$ is isomorphic to a disjoint union
of two copies of $C$.
\end{enumerate}
\end{thmx}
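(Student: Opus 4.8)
The plan is to read the defining condition as a propagating inflectional--trisecant (flex) condition on the Kummer variety and then invoke the Gunning--Welters criterion. Fix $\Gamma \in \Hilb^3(X)$ with $\Supp \Gamma = \{0\}$. Because translation by $-x$ is an automorphism of $X$, the subscheme $\Gamma_x$ is again of length $3$ and is supported at the single point $-x$; hence $\psi(\Gamma_x)$ is a length-$3$ subscheme of the Kummer image concentrated at $\psi(-x)$. To ask that $\psi(\Gamma_x)$ lie on a line $\ell$ is exactly to ask that $\ell$ be an inflectional trisecant (a flex line) of the Kummer image with contact point $\psi(-x)$, in the direction and to the second order prescribed by $\Gamma$. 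Writing $W_\Gamma$ for the set displayed in the statement, we thus have $Y=\{\,\Gamma : \Supp\Gamma=\{0\},\ \dim W_\Gamma\ge 1\,\}$. First I would record closedness: the locus of triples $(\Gamma,x,\ell)$ with $\Gamma_x\subset\psi^{-1}(\ell)$ is closed in the product of $\Hilb^3(X)$, $X$ and the Grassmannian of lines, its image in $\Hilb^3(X)\times X$ is closed by properness, and $Y$ is cut out inside the (closed) punctual locus $\Supp\Gamma=\{0\}$ as the set over which the fibres of this image have dimension $\ge 1$, which is closed by upper semicontinuity of fibre dimension.

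Claim (1) is then the forward implication of the criterion. A nonzero translation-invariant direction $\eta\in T_0X$ along which the Kummer image carries inflectional trisecants propagating in a one-parameter family forces $X$ to be a Jacobian; this is precisely the content of Gunning--Welters, and the positive-dimensionality built into the definition of $Y$ is exactly the propagation hypothesis it requires (so the full trisecant conjecture is not needed). Hence if $X$ is not a Jacobian then $W_\Gamma$ is finite for every $\Gamma$ and $Y=\emptyset$. The same input shows that every $\Gamma\in Y$ is curvilinear: a non-curvilinear length-$3$ scheme at $0$ has $\psi$-image spanning a plane for all but finitely many translates, so it cannot yield a positive-dimensional family of collinear images.

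For (2) and (3) I would exhibit the curve and then argue completeness. Along $W_\Gamma$ the flexes occur in the fixed direction $\eta$ (the tangent direction of $\Gamma$, which is translation invariant under $x\mapsto \Gamma_x$), so $\Gamma$ is a constant, translation-invariant second-order datum; by Fay's trisecant identity such data are read off from the Abel--Jacobi curve. Fixing an Abel--Jacobi embedding $C\to X$ therefore produces a morphism $C\to \Hilb^3(X)$ whose image lies in $Y$ (the associated flex family being a translate of the Abel--Jacobi curve, hence positive-dimensional), and I would check it is a closed immersion using that $C\to X$ is an embedding. The Kummer map satisfies $\psi\circ[-1]=\psi$, so $[-1]$ acts on $\Hilb^3(X)$ preserving both $\Supp\Gamma=\{0\}$ and the collinearity condition, with $W_{[-1]_*\Gamma}=-W_\Gamma$; in particular $[-1]$ maps $Y$ to itself, and composing with the first morphism yields a second one out of $-C\cong C$.

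Finally I would reconcile these two families. The Abel--Jacobi images $C$ and $-C$ are translates of one another precisely when $C$ is hyperelliptic, the hyperelliptic involution being $[-1]$ up to translation. In that case the two morphisms have equal image and $Y\cong C$, which is (2); when $C$ is non-hyperelliptic the two images are distinct, $[-1]$ interchanges them, and one obtains two disjoint copies $Y\cong C\sqcup C$, which is (3). The main obstacle, and the step I expect to demand the most care, is the combination of completeness with this bookkeeping: proving that every $\Gamma\in Y$ arises from one of the two Abel--Jacobi families and nothing else---so that no spurious or higher-dimensional components occur---and that the two morphisms $C\to Y$ are injective and unramified. This is exactly the precise, scheme-theoretic form of the Gunning--Welters correspondence, and the dichotomy $1$ versus $2$ turns on the interplay between the $[-1]$-symmetry of $\psi$ and the hyperelliptic condition.
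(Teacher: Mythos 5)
Your overall architecture matches the paper's: reduce to the Gunning--Welters criterion, observe that the strong form of the criterion (Welters' Theorem (0.4)) gives not only that $X$ is a Jacobian but that $\Gamma$ lies on a smooth curve $V_\Gamma\subset X$ whose Jacobian maps isomorphically to $X$, classify all such curves as $\pm C_x$ for a fixed Abel--Jacobi embedding, and sort out the hyperelliptic/non-hyperelliptic dichotomy via the $[-1]$-symmetry. Your closedness argument (incidence variety over $\Hilb^3(X)\times X\times\mathbb{G}$, properness, upper semicontinuity of fibre dimension) is fine, and claim (1) is correctly identified as the forward implication of the criterion. But there is a genuine gap at exactly the point you flag and then wave away: you assert that the morphism $\phi_+\colon C\to\Hilb^3(X)$, $p\mapsto C_p\cap N_3$, is a closed immersion ``using that $C\to X$ is an embedding,'' and that the injectivity/disjointness bookkeeping is ``exactly the precise, scheme-theoretic form of the Gunning--Welters correspondence.'' Neither is true. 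Gunning--Welters gives $\Gamma\subset V_\Gamma$ and identifies $V_\Gamma$ as an Abel--Jacobi curve, but it says nothing about the fibres of $p\mapsto C_p\cap N_3$, i.e.\ about whether two distinct translates $C_p$, $C_q$ (or $C_p$ and $-C_q$) can share a common length-$3$ subscheme at $0$. And $C\into X$ being an embedding is demonstrably insufficient: for $d=2$ and $C$ hyperelliptic, $C\into X$ is still an embedding, yet $\phi_+$ factors through $C/\iota\iso\PP^1$ (the paper's closing remark), so without further input your hyperelliptic conclusion could just as well be $Y\iso\PP^1$ rather than $Y\iso C$.

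The missing content is precisely the paper's Lemma \ref{lem:phi} together with the disjointness step in Proposition \ref{prop:Yd}(3). One must show that for \emph{every} degree-$2$ subscheme $T\subset C$ the restricted family $\res{Z}{T}$ is nonconstant, i.e.\ that any $\Gamma$ with $m^{-1}(C)\supset T\times\Gamma$ has degree at most $2$. For reduced $T=\{p,q\}$ this is the bound $\deg(C_p\cap C_q)\le 2$, proved by identifying $C\cap C_{q-p}$ with a fibre of the difference map $\delta\colon C\times C\to C-C$ (one point if $C$ is non-hyperelliptic, two if hyperelliptic); for nonreduced $T$ one needs the scheme-theoretic structure of $\delta^{-1}(T_p)$ over the blowup of $C-C$ at $0$: the diagonal plus one embedded point (non-hyperelliptic case), or plus two embedded points of multiplicity $1$, degenerating to one of multiplicity $2$ exactly at Weierstra\ss\ points, where one must check transversality of $2C$ and $(1+\iota)(C)$ in $C^{(2)}$. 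It is this multiplicity count---degree $\le 2$ but possibly equal to $2$---that makes $d=3$ work where $d=2$ fails for hyperelliptic curves, and nothing in your proposal supplies it. Similarly, disjointness of the two images in case (3) is not just ``$C$ and $-C$ are distinct curves'': one needs $\deg(C_p\cap(-C_q))\le 2$ for all $p,q$, which the paper gets from the addition map $C\times C\to X$ being a double cover of $C^{(2)}\iso W_2$ when $C$ is non-hyperelliptic. Your appeals to Fay's identity and to the translation-invariance of the flex direction are harmless for producing the families, but they do not substitute for this intersection-theoretic analysis of the surface $C-C$, which is the actual technical heart of the proof.
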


The proof is by reduction to the Gunning--Welters criterion; more precisely to the characterization
of Jacobians by inflectional trisecants. Note that the criterion defining $Y$ only depends on
the algebraic equivalence class of $\Theta$, and not the chosen divisor.

For the second version, we need some further terminology from \cite{PPschottky} and \cite{GL}.

\begin{deff}
A finite subscheme $\Gamma\subset X$ of degree at least $g+1$ is
\emph{theta-general} if, for all subschemes $\Gamma_{d} \subset \Gamma_{d+1}$ in $\Gamma$
of degree $d$ and $d+1$ respectively, with $d\le g$, there exists $x\in X$ such that the
translate $\Theta_x$ contains $\Gamma_d$, but not $\Gamma_{d+1}$.
\end{deff}

\begin{deff}
A finite subscheme $\Gamma\subset X$ is \emph{in special position} with respect to $2\Theta$-translates if
the codimension of $H^0(X, \sheaf{I}_\Gamma(2\Theta_x))$ in $H^0(\O_X(2\Theta_x))$
is smaller than $\deg \Gamma$ for all $x\in X$.
\end{deff}

Again note that these conditions depend only on the algebraic equivalence class of
$\Theta$. The term ``special position'' makes most sense for
$\Gamma$ of small degree, at least not exceeding $\dim H^0(\O_X(2\Theta_x)) =
2^g$.

Our second version reads:

\begin{thmx}\label{thm:schottky-B}
Let $Y\subset \Hilb^{g+2}(X)$ be the
subset consisting of all subschemes $\Gamma\subset X$ with support $\{0\}$,
which are theta-general and in special position with respect to $2\Theta$-translates.
Then $Y$ is locally closed, and
\begin{enumerate}
\item if $X$ is not a Jacobian, then $Y=\emptyset$.
\item if $X\cong \Jac(C)$ for a hyperelliptic curve $C$, then
$Y$ is isomorphic to the curve $C$ minus its Weierstra\ss \ points.
\item if $X\cong \Jac(C)$  for a non-hyperelliptic curve $C$, then $Y$ is isomorphic to a disjoint union
of two copies of $C$ minus its Weierstra\ss \ points.
\end{enumerate}
\end{thmx}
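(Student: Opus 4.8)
The plan is to deduce Theorem~\ref{thm:schottky-B} from the scheme-theoretic Castelnuovo--Schottky theorem of \cite{PPschottky, GL}, reducing the problem to a local analysis of curvilinear jets along an Abel--Jacobi curve, with the Weierstra\ss\ points emerging as the locus where theta-generality fails.

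First I would settle the topological claim. Being supported at $\{0\}$ is a closed condition, cutting out the punctual Hilbert scheme $\Hilb^{g+2}(X)_0$ of subschemes supported at the origin. Within it, the special position condition is closed: for each fixed $x$ the inequality $\codim H^0(X,\I_\Gamma(2\Theta_x)) < \deg\Gamma$ is a rank-drop condition, hence closed, and imposing it for all $x\in X$ intersects these closed conditions. Theta-generality, on the other hand, is open, since its failure means that for some sub-flag $\Gamma_d\subset\Gamma_{d+1}$ every translate $\Theta_x$ containing $\Gamma_d$ also contains $\Gamma_{d+1}$, a closed condition on $\Gamma$. Thus $Y$ is the intersection of an open and a closed subset of $\Hilb^{g+2}(X)_0$, hence locally closed in $\Hilb^{g+2}(X)$.

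Next I would feed these conditions into the Castelnuovo--Schottky machinery. The scheme-theoretic extension asserts that a theta-general $\Gamma$ of degree $g+2$ is in special position with respect to $2\Theta$-translates if and only if $(X,\Theta)$ is a Jacobian and $\Gamma$ is contained in a translate of $W$ or of $-W$, where $W\subset X$ denotes the Abel--Jacobi image of $C$. This immediately yields part (1): if $X$ is not a Jacobian, no such $\Gamma$ exists and $Y=\emptyset$. For parts (2) and (3) I would fix an isomorphism $X\cong\Jac(C)$. Since $\Gamma\in Y$ is supported at $\{0\}$ and lies on such a smooth curve, and since $g+2\ge 3$, it is the unique length-$(g+2)$ subscheme of that curve supported at the point lying over $0$; conversely each point $w$ of $W$ (resp.\ of $-W$) gives such a jet by translating the osculating scheme at $w$ to the origin. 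This produces two morphisms $C\to\Hilb^{g+2}(X)_0$ whose images meet $Y$.

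The heart of the argument, and the step I expect to be the main obstacle, is to determine exactly which of these jets are theta-general, and to show the answer is: precisely those supported at non-Weierstra\ss\ points. Writing $p\in C$ for the point corresponding to $w$, the relevant flags are the sub-jets $d\cdot p$ for $d\le g$, and theta-generality requires, for each such $d$, a translate $\Theta_x$ meeting $W$ at $w$ with contact order exactly $d$. Via the Riemann singularity theorem the achievable contact orders at $p$ are governed by the jumping of $h^0$ of the associated line bundles, i.e.\ by the Weierstra\ss\ gap sequence at $p$: at a non-Weierstra\ss\ point the gap sequence is the generic one $\{1,\dots,g\}$ and every contact order $1\le d\le g$ is realized, giving theta-generality, whereas at a Weierstra\ss\ point some value $d\le g$ is skipped and the corresponding flag cannot be separated. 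This identifies the domain as $C$ minus its Weierstra\ss\ points. It remains to check that each of the two morphisms is a locally closed immersion onto its image: injectivity follows because $\Gamma$ determines the unique osculating Abel--Jacobi curve containing it, and hence the point $w$, while the same rigidity of the osculating jet gives injectivity on tangent spaces. Finally, the hyperelliptic dichotomy is read off from the relation between $W$ and $-W$: the hyperelliptic involution forces $-W$ to be a translate of $W$, so the two morphisms have the same image and $Y\cong C\setminus\{\text{Weierstra\ss\ points}\}$, giving part (2); for a non-hyperelliptic curve $-W$ is not a translate of $W$, the two images are disjoint, and $Y$ is two copies of $C\setminus\{\text{Weierstra\ss\ points}\}$, giving part (3).
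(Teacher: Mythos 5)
Your skeleton matches the paper's: reduce to the scheme-theoretic Castelnuovo--Schottky theorem, characterize theta-generality of punctual jets on an Abel--Jacobi curve via $h^0$ at the supporting point, and settle the hyperelliptic dichotomy via the involution. Your gap-sequence analysis of theta-generality is sound and is essentially the paper's Proposition on Weierstra\ss\ points: the failure direction ($h^0(\O_C(dp))\ge 2$ for some $d\le g$ forces $T(dp)=T((d+1)p)$, since any $\Theta_x$ meeting $C$ properly cuts a degree-$g$ divisor with $h^0=1$) is identical, and your direct construction of translates with prescribed contact order is a legitimate, arguably more elementary substitute for the paper's Fourier--Mukai argument, which establishes strictness of all the inclusions $T(\Gamma_i)\supsetneq T(\Gamma_{i+1})$ by a codimension count. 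However, there are two genuine gaps. First, you quote the Castelnuovo--Schottky theorem as an ``if and only if,'' but Theorem~\ref{thm:castelnuovo-schottky} gives only one direction. The converse --- that any degree-$(g+2)$ subscheme $\Gamma$ lying on an Abel--Jacobi curve is automatically in special position --- is needed to show your non-Weierstra\ss\ jets actually belong to $Y$, and must be proved. It is short (from $0\to H^0(\I_C(2\Theta_x))\to H^0(\O_X(2\Theta_x))\to H^0(\O_C(2\Theta_x))$ the codimension is at most $h^0(\O_C(2\Theta_x))=g+1<g+2$), but your proposal never supplies it.

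The second and more serious gap is the cluster of claims you dispose of as ``rigidity of the osculating jet.'' That $\Gamma$ determines the curve, that each map $C\to\Hilb^{g+2}(X)$ is an immersion, and that the two images are disjoint in the non-hyperelliptic case all rest on the intersection bounds $\deg(C_p\cap C_q)\le 2$ and $\deg(C_p\cap(-C_q))\le 2$, which is the technical heart of the paper (its Lemma~\ref{lem:phi} and Proposition~\ref{prop:Yd}(3)). In particular, ``$-W$ is not a translate of $W$, hence the images are disjoint'' is a non sequitur at the level of $(g+2)$-jets: two non-translate curves could a priori be tangent to high order at a common point, and ruling this out requires the argument that the addition map $C\times C\to X$ is a degree-two cover of $W_2$, so $C_p\cap(-C_q)$ has degree at most $2$. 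Likewise, injectivity on tangent spaces is precisely the hardest case --- nonreduced degree-$2$ subschemes $T\subset C$ --- which the paper handles via the blowup of $C-C$ at $0$, an embedded-point computation, and in the hyperelliptic case the double cover $C\times C\to C^{(2)}$ branched along the diagonal together with a transversality check of $2C$ against $(1+\iota)(C)$; none of this is ``the same rigidity'' for free. A smaller remark: your openness claim for theta-generality is asserted rather than proved (quantifying over flags and over $x$ yields a priori only constructibility of the failure locus); the paper sidesteps this entirely by obtaining local closedness from the final identification of $Y$ as the image, under a closed embedding, of $C$ minus its finitely many Weierstra\ss\ points --- a route available to you as well once the immersion statement is actually established.
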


The proof of Theorem \ref{thm:schottky-B} is by reduction to the Castelnuovo--Schottky theorem, which is the following:

\begin{thm}\label{thm:castelnuovo-schottky}
Let $\Gamma\subset X$ be a finite subscheme of degree $g+2$,
in special position with respect to $2\Theta$-translates,
but theta-general. Then there exist a nonsingular curve $C$ and an isomorphism
$\Jac(C) \cong X$ of \ppav{}s, such that $\Gamma$ is contained in the
image of $C$ under an Abel--Jacobi embedding.
\end{thm}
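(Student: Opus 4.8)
The plan is to reduce the statement to the Gunning--Welters trisecant criterion \cite{gunning,Wflexes}, which characterizes Jacobians among indecomposable $\ppav$s by the presence of a trisecant line of the Kummer variety $\psi(X)$, in the degenerate (inflectional, and more generally scheme-theoretic) form needed when $\Gamma$ is non-reduced. The whole problem is to manufacture such a trisecant out of the two hypotheses on $\Gamma$, and then to read off that $\Gamma$ sits on the resulting Abel--Jacobi curve.

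For each $x \in X$, the special-position hypothesis says that the restriction map $H^0(\O_X(2\Theta_x)) \to \O_\Gamma$ fails to be surjective, its image having dimension strictly less than $\deg \Gamma = g+2$. I would connect this to the geometry through the decomposable sections of $|2\Theta_x|$: for any $a,b$ with $a+b=2x$ the reducible divisor $\Theta_a + \Theta_b$ lies in $|2\Theta_x|$, and it contains $\Gamma$ exactly when the two translates $\Theta_a$, $\Theta_b$ split the length of $\Gamma$ between them. The theta-general hypothesis is precisely the transversality statement controlling, for each sub-length $d \le g$, which sub-configurations $\Gamma_d \subset \Gamma$ can be swept into a single translate $\Theta_x$; it therefore governs the span of these decomposable sections inside $H^0(\I_\Gamma(2\Theta_x))$.

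The heart of the argument is then an abelian analogue of Castelnuovo's lemma, as in Pareschi--Popa \cite{PPschottky}: because $g+2$ is the critical degree, the special-position deficiency, present for \emph{every} $x$, cannot be accounted for by decomposable sections compatible with theta-generality unless $\Gamma$ obeys an exceptional collinearity. Carrying out this comparison and using the theta-general hypothesis to discard the degenerate alternatives produces three (possibly infinitely near) points of $\psi(X)$ on a common line, i.e.\ a trisecant of the Kummer variety; Gunning--Welters then yields a nonsingular curve $C$ and an isomorphism $\Jac(C) \cong X$ of $\ppav$s. To place $\Gamma$ on $C$ I would invoke Gunning's description of the trisecants of a Jacobian Kummer variety, by which every such trisecant is cut out by points of the Abel--Jacobi curve; tracing our trisecant back through this description, together with the filtration supplied by theta-generality, forces the entire subscheme $\Gamma$ into a single Abel--Jacobi translate of $C$.

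I expect the main obstacle to be the scheme-theoretic bookkeeping when $\Gamma$ is non-reduced --- the case actually needed in the application, where $\Gamma$ is supported at a single point. The classical argument is phrased for points in sufficiently general position, and extending it, following \cite{GL}, requires replacing honest secants by their infinitesimal degenerations, verifying that theta-generality still produces the separating sections at each step of the filtration, and checking that the degenerate trisecant so obtained genuinely satisfies the hypotheses of the inflectional Gunning--Welters criterion rather than arising from a spurious collapse. Controlling these limits uniformly in $x$ is the delicate point on which the whole reduction rests.
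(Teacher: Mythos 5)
You should first be aware that the paper contains no proof of this theorem: it is imported wholesale, for reduced $\Gamma$ from Pareschi--Popa \cite{PPschottky} (and, under a different genericity hypothesis, Grushevsky \cite{grushevsky, grushevsky-err}), and in the scheme-theoretic form stated from \cite{GL}; the only internal indication of method is the remark that the Gunning--Welters criterion underpins it. Measured against those sources, your outline does identify the right skeleton --- manufacture degenerate trisecants of the Kummer variety from the two hypotheses, apply Gunning--Welters, then locate $\Gamma$ on the resulting curve --- and the link you draw between decomposable divisors $\Theta_a + \Theta_b \in |2\Theta_x|$ and collinearity on the Kummer variety is the correct mechanism (it is essentially Riemann's addition formula, and it is how Grushevsky argues; Pareschi--Popa reach the same point sheaf-theoretically via $M$-regularity and theta-duality). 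But the step you label ``the heart of the argument'' --- converting the $h^0$-deficiency, valid for \emph{every} $x$, into a linear dependence among split sections compatible with theta-generality, and thence into collinearity --- is asserted, not performed. That step \emph{is} the theorem; nothing in your proposal indicates how the Castelnuovo-type count actually runs, in the reduced case or otherwise, so what you have is a correct map of the terrain rather than a proof.

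Beyond the missing heart, one point is concretely wrong rather than merely absent: you invoke Gunning--Welters as though a single (possibly degenerate) trisecant certifies a Jacobian. It does not. Gunning's criterion requires a \emph{positive-dimensional family} of trisecants --- in the form used in this paper, a positive-dimensional set of $x$ with $\Gamma'_x \subset \psi^{-1}(\ell)$ for a fixed degree $3$ subscheme $\Gamma'$ (cf.\ \cite[Theorem~(0.4)]{Wcriterion}) --- and a single trisecant suffices only by Krichever's theorem \cite{krichever}, which the paper explicitly declines to use. Your own setup contains the fix, but you never exhibit the parameter space: in \cite{PPschottky, GL} it is the theta-dual $T(\Gamma_{g-1}) = \{x \in X \st \Gamma_{g-1} \subset \Theta_x\}$ of a degree $g-1$ subscheme $\Gamma_{g-1} \subset \Gamma$, which theta-generality forces to have positive dimension (compare the Fourier--Mukai codimension count in the proof of Proposition \ref{prop:weierstrass}), with a residual degree $3$ subscheme of $\Gamma$ playing the role of $\Gamma'$. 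Finally, your closing move --- ``tracing the trisecant back \ldots forces the entire subscheme $\Gamma$ into a single Abel--Jacobi translate'' --- also needs an argument you do not supply: the trisecant only involves a degree $3$ piece of $\Gamma$, and placing the remaining degree $g-1$ part on the curve requires a separate Jacobi-inversion/theta-duality step once the isomorphism $\Jac(C) \cong X$ is in hand.
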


Here, an \emph{Abel--Jacobi embedding} means a map $C\to \Jac(C)$ of the form $p\mapsto p-p_0$
for some chosen base point $p_0\in C$.
This theorem, for reduced $\Gamma$, is due to Pareschi--Popa \cite{PPschottky} and,
under a different genericity hypothesis, Grushevsky \cite{grushevsky, grushevsky-err}.
The scheme theoretic extension stated above is by the authors \cite{GL}.
The scheme theoretic generality is clearly essential for the application
in Theorem \ref{thm:schottky-B}.

We point out that the Gunning--Welters criterion is again the fundamental result
that underpins Theorem \ref{thm:castelnuovo-schottky}, and
thus Theorem \ref{thm:schottky-B}.
More recently, Krichever \cite{krichever} showed that Jacobians are in fact
characterized by the presence of a single trisecant (as opposed to a positive
dimensional family of translations), but we are not making use of this result.

\section{Subschemes of Abel--Jacobi curves}

For each integer $d\ge 1$, let
\begin{equation*}
Y_d\subset \Hilb^d(X)
\end{equation*}
be the closed subset consisting of all degree $d$ subschemes $\Gamma\subset X$ such that
\begin{enumerate}
\item[(i)] the support of $\Gamma$ is the origin $0\in X$,
\item[(ii)] there exists a smooth curve $C\subset X$ containing $\Gamma$,
such that the induced map $\Jac(C)\to X$ is an isomorphism of \ppav's.
\end{enumerate}
We give $Y_d$ the induced reduced scheme structure.

We shall now prove analogues of (1), (2) and (3) in Theorems \ref{thm:schottky-A} and \ref{thm:schottky-B}
for $Y_d$ with $d\ge 3$:

\begin{prop}\label{prop:Yd}
With $Y_d\subset \Hilb^d(X)$ as defined above, we have:
\begin{enumerate}
\item If $X$ is not a Jacobian, then $Y_d=\emptyset$.
\item If $X\cong \Jac(C)$ for a hyperelliptic curve $C$, then
$Y_d$ is isomorphic to the curve $C$.
\item If $X\cong \Jac(C)$ for a non-hyperelliptic curve $C$, then $Y_d$ is isomorphic to a disjoint union
of two copies of $C$.
\end{enumerate}
\end{prop}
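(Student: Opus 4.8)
The plan is to reduce the whole statement to the classification of Abel--Jacobi curves inside a Jacobian, followed by a local (osculating) analysis of how a single point of such a curve sits at the origin. Part (1) is immediate: if $X$ is not a Jacobian, then condition (ii) is never satisfied, so $Y_d=\emptyset$. For (2) and (3) assume $X\iso\Jac(C)$ and fix an Abel--Jacobi embedding $\iota\colon C\to X$ with image $\Sigma$. First I would pin down which curves $C'$ can occur in condition (ii). Since $\Jac(C')$ is the Albanese of $C'$, the inclusion $C'\into X$ factors as an Abel--Jacobi map followed by the homomorphism $\Jac(C')\to X$ and a translation; if that homomorphism is an isomorphism of \ppav{}s, then by the Torelli theorem it is $\pm$ an isomorphism $C'\iso C$, so $C'$ is a translate of either $\Sigma$ or $-\Sigma$, and automorphisms of $C$ only reparametrise these. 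A subscheme $\Gamma$ supported at $0$ lies on a translate of $\Sigma$ precisely when it equals the unique length-$d$ subscheme of $\Sigma$ concentrated at one point $\iota(p)$, translated back to the origin; call this $\alpha(p)$. Thus set-theoretically $Y_d$ is the union of the image of $\alpha\colon C\to\Hilb^d(X)$ and of the analogous map built from $-\Sigma$, whose image is $\{-\alpha(p)\}$.

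The next step is to analyse $\alpha$ through the classical identification of the tangent line of $\Sigma$ at $\iota(p)$, transported to $T_0X$ by the group law, with the canonical map $\phi_K(p)\in\PP(T_0X)$; more generally the $(d-1)$-jet of $\Sigma$ at $\iota(p)$ encodes the osculating data of the canonical image. In particular $\alpha(p)$ always remembers $\phi_K(p)$, and for $d\ge 3$ it also remembers the second-order term. The key computation, carried out in a local parameter, is that $\alpha(p)=-\alpha(p)$ forces the second-order part of the germ to be proportional to the tangent vector, i.e. forces $\phi_K$ to ramify at $p$ (for $d=3$ the converse holds as well); this is exactly where $\operatorname{char}k\ne 2$ enters. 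This single criterion controls both cases at once, and it is visibly a statement about the $2$-jet, which is why the hypothesis $d\ge 3$ is indispensable.

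In the \emph{non-hyperelliptic} case $\phi_K$ is a closed immersion, so $\phi_K(p)$ separates points and $\alpha$ is injective; the same map $-\alpha$ is injective as well. Since $\phi_K$ is everywhere unramified, the criterion gives $\alpha(p)\ne-\alpha(p)$ for all $p$, and a common point of the two images would force $\phi_K(p)=\phi_K(q)$, hence $p=q$ and $\alpha(p)=-\alpha(p)$, a contradiction; the two images are therefore disjoint, giving two copies of $C$. In the \emph{hyperelliptic} case, choosing a Weierstraß point as base point gives $\iota(\sigma p)=-\iota(p)$ for the hyperelliptic involution $\sigma$, so $-\Sigma=\Sigma$ and $\alpha(\sigma p)=-\alpha(p)$; the two maps share a single image, so at most one copy appears. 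Because $\phi_K$ factors as the $2:1$ cover $C\to\PP^1$ followed by the rational normal curve, $\phi_K(p)=\phi_K(q)$ forces $q\in\{p,\sigma p\}$; at a non-Weierstraß point the cover is étale, so $\phi_K$ is an immersion and the criterion yields $\alpha(p)\ne\alpha(\sigma p)$, while at a Weierstraß point $\sigma p=p$ and no collision occurs. Hence $\alpha$ is injective and $Y_d$ is a single copy of $C$. (For $d=2$ only the $1$-jet survives, $\alpha(p)=-\alpha(p)$ identically, and one finds instead a single rational curve; this confirms the necessity of $d\ge 3$.)

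The main obstacle is upgrading these bijections to isomorphisms of schemes: one must show that $\alpha$ (and its twin $-\alpha$) is an immersion, i.e. injective on tangent spaces, so that it is a closed immersion onto the reduced locus $Y_d$ rather than merely a bijective, possibly inseparable, morphism. I expect to handle this by differentiating the family $p\mapsto\alpha(p)$ and relating the derivative to the non-degeneracy of the osculating sequence of $\Sigma$, which is again governed by $\phi_K$; the very same second-order input that produced $\alpha(p)\ne-\alpha(p)$ for $d\ge 3$ should make the differential injective. The remaining technical point, the classification of the curves entering condition (ii), is dispatched by the Torelli and Albanese argument already sketched, and requires no appeal to the cohomological minimal-class characterisation.
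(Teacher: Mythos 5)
Your set-theoretic analysis is correct and takes a genuinely different route from the paper. Where the paper proves injectivity of $\phi_+$ on points and on tangent vectors simultaneously, via the difference map $\delta\colon C\times C\to C-C$ and the structure of the blowup of $C-C$ at $0$ (Lemma \ref{lem:phi}), you exploit that the curvilinear scheme $\alpha(p)$ is precisely the $(d-1)$-jet of the Abel--Jacobi germ up to reparametrization, that its tangent direction recovers $\phi_K(p)$ (the Gauss map of an Abel--Jacobi curve is the canonical map), and the characteristic-$\ne 2$ computation that $\alpha(p)=-\alpha(p)$ forces the second-order coefficient $v_2$ into $\langle v_1\rangle$, i.e.\ forces $\phi_K$ to ramify at $p$. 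This correctly yields injectivity of $\alpha$ and disjointness of the two images in the nonhyperelliptic case, and injectivity of $\alpha$ in the hyperelliptic case; your Torelli/Albanese classification of the curves in condition (ii) matches the fact the paper uses. As a jets-versus-correspondences trade-off, your approach makes the role of the canonical map transparent, while the paper's $\delta^{-1}(T_p)$ analysis handles points and tangent vectors uniformly.

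However, there is a genuine gap at the step you defer, and your proposed mechanism for closing it would fail. You claim that ``the very same second-order input'' that gives $\alpha(p)\ne-\alpha(p)$ should make $d\alpha$ injective; but at the Weierstra\ss\ points of a hyperelliptic curve that input is \emph{absent}: there $\phi_K$ ramifies, $v_2\in\langle v_1\rangle$, and indeed $\alpha(p)=-\alpha(p)$ (the germ of the symmetric embedding at a Weierstra\ss\ point is $(-1)$-invariant), so neither the first-order datum ($d\phi_K$ injective) nor the second-order non-degeneracy is available. Injectivity of $d\alpha$ at such $p$ requires \emph{third-order} information: with a local parameter $t$ at a Weierstra\ss\ point one has an affine lift $f_i(t)=t^{2(i-1)}\cdot(\mathrm{unit})$ of $\phi_K$, the $2$-jet is $(v_1,v_2)=(f,f'/2)$, and its velocity escapes the reparametrization orbit only through the $f''$-term. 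This is exactly the delicate case in the paper's Lemma \ref{lem:phi}, where for $q$ Weierstra\ss\ the scheme $\delta^{-1}(T_p)$ has a single embedded point of multiplicity $2$, established via the transversality of $2C$ and $(1+\iota)(C)$ in $C^{(2)}$. Until you supply this, part (2) of the proposition is unproven at the $2g+2$ Weierstra\ss\ points: a priori $\alpha$ could ramify there, and $Y_d$ with its reduced structure would then be a curve bijective to $C$ but with cusps, hence not isomorphic to $C$. (A minor slip: your closing parenthetical on $d=2$ is wrong for nonhyperelliptic $C$, where $Y_2\iso C$; only in the hyperelliptic case is $Y_2\iso\PP^1$, as in the paper's historical remark.)
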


As preparation for the proof, consider a Jacobian $X=\Jac(C)$ for some smooth curve $C$ of genus $g$.
It is convenient to fix an Abel--Jacobi embedding
$C\into X$; any other curve $C'\subset X$
for which $\Jac(C') \to X$ is an isomorphism is of the form
$\pm C_x$ for some $x\in X$.
Such a curve $\pm C_x$ contains the origin $0\in X$ if and only
if $x\in C$.
Hence $Y_d$ is the image of the map
\begin{equation*}
\phi = \phi_+ {\textstyle\coprod} \phi_-\colon C {\textstyle\coprod} C \to \Hilb^d(X)
\end{equation*}
that sends $x\in C$ to the unique degree $d$ subscheme $\Gamma\subset \pm C_x$
supported at $0$, with the positive sign on the first copy
of $C$ and the negative sign on the second copy.

More precisely, $\phi$ is defined as a morphism of schemes as follows.
Let $m\colon X\times X \to X$ denote the group law, and consider
\begin{equation*}
m^{-1}(C) \cap (C\times X)
\end{equation*}
as a family over $C$ via first projection. The fibre over $p\in C$ is 
$C_p$.
Let $N_d = V(\mathfrak{m}_0^d)$ be the $d-1$'st order
infinitesimal neighbourhood of the origin in $X$. Then
\begin{equation*}
Z = m^{-1}(C) \cap (C\times N_d) \subset C\times X
\end{equation*}
is a $C$-flat family of degree $d$ subschemes in $X$;
its fibre over $p\in C$ is $C_p\cap N_d$.
This family defines $\phi_+\colon C \to \Hilb^d(X)$,
and we let $\phi_- = - \phi_+$ (where the minus sign denotes the automorphism of $\Hilb^d(X)$
induced by the group inverse in $X$).

\begin{lem}\label{lem:phi}
The map $\phi_+\colon C\to\Hilb^d(X)$ is a closed embedding for $d>2$.
\end{lem}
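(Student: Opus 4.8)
The plan is to verify that $\phi_+$ is a closed embedding by showing it is a proper monomorphism. Properness is free, since $C$ is projective; and a morphism of finite-type $k$-schemes is a monomorphism as soon as it is unramified and universally injective (the diagonal is then both an open immersion and surjective, hence an isomorphism), while a proper monomorphism is automatically a closed immersion. So I would reduce the statement to two things: that $\phi_+$ is injective on closed points, and that $\phi_+$ is injective on Zariski tangent spaces. To run both arguments I would first make the family $Z = m^{-1}(C)\cap(C\times N_d)$ explicit. Writing $u\colon C\into X$ for the fixed Abel--Jacobi embedding, choosing a uniformizer $s$ at $p\in C$ and linear coordinates on the formal completion of $X$ at $0$, the fibre $\phi_+(p)$ is the scheme-theoretic image of the $(d-1)$-jet
\begin{equation*}
s\longmapsto \sum_{k=1}^{d-1}\tfrac{1}{k!}\,u^{(k)}(p)\,s^{k}\bmod s^{d},\qquad u^{(k)}(p)\in T_0X,
\end{equation*}
taken modulo reparametrisation of $s$. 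Thus $\phi_+(p)$ encodes exactly the osculating behaviour of $C$ at $p$ to order $d-1$.

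For injectivity on points, I would read off the first-order datum. For $d\ge 2$ the jet above determines the tangent direction $[u'(p)]\in\PP(T_0X)$, and under $T_0X\cong H^0(C,\omega_C)^{\vee}$ the assignment $p\mapsto[u'(p)]$ is the canonical map of $C$. Hence $\phi_+(p)=\phi_+(q)$ forces $p$ and $q$ to have the same canonical image. If $C$ is non-hyperelliptic the canonical map is an embedding and we are done. If $C$ is hyperelliptic it is two-to-one onto a rational normal curve, giving only $q\in\{p,\iota(p)\}$, and this is exactly where $d>2$ must be used: the order-two part of the jet contributes a reparametrisation-invariant ``curvature'' coefficient, and because the hyperelliptic involution acts as $-1$ on $T_0X$ (it acts by $-1$ on $H^0(\omega_C)$) this coefficient changes sign when $p$ is replaced by $\iota(p)$. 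Since the coefficient is nonzero away from Weierstra\ss\ points (the rational normal curve has non-degenerate second osculating spaces), the length-$3$ subschemes at $p$ and $\iota(p)$ differ; and a Weierstra\ss\ point is fixed by $\iota$, so nothing needs separating there. This gives injectivity on closed points.

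For the immersion property I would differentiate the same description: the differential of $\phi_+$ at $p$ is the class of $(u''(p),\dots,u^{(d)}(p))$ modulo the tangent space to the reparametrisation orbit, and a short computation shows it vanishes precisely when $u''(p),\dots,u^{(d)}(p)$ all lie on the line $\langle u'(p)\rangle$, i.e. when the canonical map is ramified to high order at $p$. Away from Weierstra\ss\ points the canonical map is an immersion, so $u''(p)\notin\langle u'(p)\rangle$ and $d\phi_+$ is injective. The one remaining case---which I expect to be the crux of the whole lemma---is a Weierstra\ss\ point $w$ of a hyperelliptic curve, where the canonical map ramifies and $u''(w)=0$, so the first-order datum is genuinely stationary. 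Here I would argue locally with the uniformizer $y$ (for $y^2=f(x)$): it makes $u'$ an \emph{even} power series in this parameter, so that while $u''(w)=0$ one has $u'''(w)\notin\langle u'(w)\rangle$, and hence the length-$3$ part of the jet still moves to first order. This forces $d\phi_+$ to be injective at $w$ as well, once more using $d>2$ in an essential way. Assembling the three steps, $\phi_+$ is a proper injective immersion, hence a closed embedding.
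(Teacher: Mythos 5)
Your skeleton is fine and in fact matches the paper's: properness is free, and a proper monomorphism is a closed immersion, so everything reduces to injectivity on closed points and on tangent vectors, which is exactly the paper's formulation that the family $Z$ restricted to any degree~$2$ subscheme $T\subset C$ is nonconstant. The identification of the first-order datum with the canonical map, and the sign flip of the curvature coefficient under $\iota=-1|_C$ (legitimate in characteristic $\ne 2$), are also correct ideas. But there is a genuine gap: the paper works over an algebraically closed field of \emph{any} characteristic different from $2$, and your jet calculus is a characteristic-zero computation. Concretely: (i) your expansion $s\mapsto \sum_k \frac{1}{k!}u^{(k)}(p)s^k$ in ``linear coordinates on the formal completion of $X$ at $0$'' presupposes that the formal group of $X$ is additive, i.e.\ that translations can be simultaneously linearized; this is false in characteristic $p>0$ (and $1/k!$ is undefined for $k\ge p$, relevant already for $d=g+2$). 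Inversion can be linearized when $\mathrm{char}\,k\ne 2$, so the oddness of the local parametrization at a Weierstra\ss{} point survives, but the \emph{conclusions you draw by differentiating it} do not: in characteristic $3$, writing $u=a_1y+a_3y^3+\cdots$ gives $u'''(w)=6a_3=0$ and the naive first-order motion of the curvature coefficient is $3\bar a_3\,y_0=0$, so your crucial nonvanishing at Weierstra\ss{} points evaporates; the true nonvanishing there is carried by the quadratic and cubic correction terms of the formal group law, which your framework discards by conflating coordinate derivatives with invariant derivatives. (ii) The pivotal step, the claimed equivalence ``$d\phi_+(p)=0$ iff $u''(p),\dots,u^{(d)}(p)\in\langle u'(p)\rangle$'', is exactly the ``short computation'' you never perform, and it is precisely where these group-law corrections enter; as stated it cannot be taken for granted outside characteristic $0$.

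For comparison, the paper sidesteps all local expansions: it shows that any constant family $T\times\Gamma\subset m^{-1}(C)$ forces $\deg\Gamma\le 2$, by identifying $m^{-1}(C)\cap(T_p\times C)$ with $\delta^{-1}(T_p)$ for the difference map $\delta\colon C\times C\to C-C$, and then using that $\delta$ (resp.\ $C^{(2)}\to W_2$ in the hyperelliptic case) is the blow-up of $C-C$ at $0$: the preimage of $T_p$ is the diagonal plus embedded points of total multiplicity at most $2$. The only delicate case, a Weierstra\ss{} point, is handled by the transversality of $2C$ and $(1+\iota)(C)$ in $C^{(2)}$, proved by a $\pm1$ eigenvalue argument for the factor-exchange involution that needs only $\mathrm{char}\,k\ne 2$. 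If you restrict to characteristic $0$, your route can be completed (after actually carrying out the differential computation), and it is arguably more explicit; but as a proof of the lemma in the paper's stated generality it fails in small positive characteristic, e.g.\ characteristic $3$.
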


In the proof of the Lemma, we shall make use of the difference map $\delta\colon C\times C \to X$, sending a pair $(p,q)$
to the degree zero divisor $p-q$. We let $C-C\subset X$ denote its image.
If $C$ is hyperelliptic, we may and will choose the Abel--Jacobi embedding $C\subset X$ such
that the involution $-1$ on $X$ restricts to the hyperelliptic involution $\iota$ on $C$.
Thus, when $C$ is hyperelliptic, $C-C$ coincides with the distinguished surface $W_2$, and the difference map $\delta$
can be factored via the symmetric product $C^{(2)}$:
\begin{equation*}
\begin{diagram}
C\times C  & \rTo^{1\times \iota}_{\iso} & C\times C \\
\dTo           &          & \dTo^\delta\\
C^{(2)}           & \rTo                 & X
\end{diagram}
\end{equation*}
We note that the double cover $C\times C \to C^{(2)}$,
that sends an ordered pair to the corresponding unordered pair, is branched along the diagonal,
so that via $1\times \iota$, the branching divisor becomes the ``antidiagonal'' $(1,\iota)\colon C\into C\times C$.

As is well known, the surface $C-C$ is singular at $0$, and nonsingular everywhere else.
The blowup of $C-C$ at $0$ coincides with $\delta\colon C\times C\to C-C$ when $C$ is nonhyperelliptic,
and with the addition map $C^{(2)} \to W_2$ when $C$ is hyperelliptic.

\begin{proof}[Proof of Lemma \ref{lem:phi}]
To prove that $\phi_+$ is a closed embedding, we need to show that its restriction to any
finite subscheme $T\subset C$ of degree $2$ is nonconstant, i.e.~that the family
$\res{Z}{T}$ is not a product $T\times \Gamma$. For this it suffices to prove that if
$\Gamma$ is a finite scheme such that
\begin{equation}\label{eq:constant}
m^{-1}(C) \supset T\times \Gamma,
\end{equation}
then the degree of $\Gamma$ is at most $2$.

Consider the following commutative diagram:
\begin{equation}\label{eq:d-diagram}
\begin{diagram}
X\times X & \rTo^{(m, \mathit{pr}_2)}_{\iso} &  X\times X \\
\uInto && \uInto \\
m^{-1}(C) \cap (X\times C)  & \rTo_{\iso} & C\times C\\
          & \rdTo_{\mathit{pr}_1}     &  \dTo^{\delta} \\
          &                           &  X
\end{diagram}
\end{equation}
First suppose $T = \{p,q\}$ with $p\ne q$. The claim is then simply that $C_p\cap C_q$, or equivalently its translate
$C\cap C_{q-p}$, is at most a finite scheme of degree $2$.
Diagram \eqref{eq:d-diagram} identifies the fibre $\delta^{-1}(q-p)$ on the right with precisely $C\cap C_{q-p}$
on the left. But $\delta^{-1}(q-p)$ is a point when $C$ is nonhyperelliptic, and two points if $C$ is hyperelliptic.

Next suppose $T\subset C$ is a nonreduced degree $2$ subscheme supported in $p$. 
Assuming $\Gamma$ satisfies \eqref{eq:constant}, we have $\Gamma\subset C_p$, so
\begin{equation*}
m^{-1}(C) \cap (X\times C_p) \supset T\times \Gamma
\end{equation*}
or equivalently
\begin{equation*}
m^{-1}(C) \cap (X\times C) \supset T_p \times \Gamma_{-p}
\end{equation*}
We have $T_p \subset C-C$, and Diagram \eqref{eq:d-diagram} identifies $\delta^{-1}(T_p)$ on the right
with $m^{-1}(C)\cap(T_p\times C)$ on the left.

Suppose $C$ is nonhyperelliptic.
Then $\delta$ is the blowup of $0\in C-C$, and
$\delta^{-1}(T_p)$ is the diagonal $\Delta_C\subset C\times C$ together with an
embedded point of multiplicity $1$ (corresponding to the tangent direction of $T_p\subset C - C$).
Diagram \eqref{eq:d-diagram} identifies the diagonal in $C\times C$ on the right with $\{0\}\times C$ on the left.
Thus $m^{-1}(C)\cap (T_p\times C)$ is $\{0\}\times C\subset X\times C$ with an embedded point.
Equivalently, $m^{-1}(C) \cap (T\times C_p)$ is $\{p\}\times C_p$ with an embedded point, say at $(p,q)$.
This contains no constant family $T\times \Gamma$ except for $\Gamma=\{q\}$, so $\Gamma$ has at most degree $1$.

Next suppose $C$ is hyperelliptic.
We claim that $\delta^{-1}(T_p)$ is the diagonal $\Delta_C\subset C\times C$ with either two embedded points
of multiplicity $1$, or one embedded point of multiplicity $2$. As in the previous case, this implies
that $m^{-1}(C) \cap (T\times C_p)$ is $\{p\}\times C_p$ with two embedded points of multiplicity $1$ or one
embedded point of multiplicity $2$, and the maximal constant family $T\times \Gamma$ it contains has $\Gamma$ of degree $2$.
It remains to prove that $\delta^{-1}(T_p)$ is as claimed.

We have $W_2=C-C$, and the blowup at $0$ is $C^{(2)}\to W_2=C-C$.
The preimage of $T_p$ is the curve $(1+\iota)\colon C\to C^{(2)}$,
together with an embedded point of multiplicity $1$, say supported at $q+\iota(q)$.
Now the two to one cover $C\times C\to C^{(2)}$ is branched along the diagonal $2C\subset C^{(2)}$,
If $q\ne \iota(q)$, then the preimage in $C\times C$
is just $(1,\iota)\colon C\to C\times C$, together with two embedded points of multiplicity $1$,
supported at $(q, \iota(q))$ and $(\iota(q), q)$.
If $q=\iota(q)$, i.e.~$q$ is Weierstraß, then we claim the preimage in $C\times C$
is $(1,\iota)\colon C\to C\times C$ together with an embedded point of multiplicity $2$.
This follows once we know that the curves $2C$ and $(1+\iota)(C)$ in $C^{(2)}$ intersect transversally.
And they do, as the tangent spaces of the two curves $(1,1)(C)$ (the diagonal) and $(1,\iota)(C)$ in $C\times C$
are invariant under the involution exchanging the two factors, with eigenvalues $1$ and $-1$, respectively.
\end{proof}

\begin{proof}[Proof of Proposition \ref{prop:Yd}]
Point (1) is obvious, so we may assume $X=\Jac C$. By Lemma \ref{lem:phi},
$\phi_+$ is a closed embedding and hence so is $\phi_- = -\phi_+$.
If $C$ is hyperelliptic, we have chosen the embedding $C\subset X$ such that the involution $-1$ on $X$ extends
the hyperelliptic involusion $\iota$ on $C$. It follows that $C_p = -C_{\iota(p)}$, and thus $\phi_- = \phi_+\circ \iota$.
Thus the two maps $\phi_+$ and $\phi_-$ have coinciding image, and (2) follows.

For (3), it remains to prove that if $C$ is nonhyperelliptic, then the images of $\phi_-$ and $\phi_+$ are disjoint,
i.e.~we never have $C_p\cap N_d = (-C_q)\cap N_d$ for distinct points $p, q\in C$.
In fact, $C_p\cap (-C_q)$ is at most a finite scheme of degree $2$: the addition map
\begin{equation*}
C\times C \to X
\end{equation*}
is a degree two branched cover of $C^{(2)} \iso W_2$ (using that $C$ is nonhyperelliptic),
and its fibre over $p+q \in W_2$ is isomorphic to $C_p\cap (-C_q)$.
\end{proof}

\section{Proof of Theorem \ref{thm:schottky-A}}

In view of Proposition \ref{prop:Yd}, it suffices to prove that $Y$ in Theorem
\ref{thm:schottky-A} agrees with $Y_3$ in Proposition \ref{prop:Yd}. This is a reformulation of the Gunning--Welters criterion: given $\Gamma\in \Hilb^3(X)$, consider the set
\begin{equation*}
V_{\Gamma} = \{x\in X \st \text{$\Gamma_X \subset \psi^{-1}(\ell)$ for some line $\ell\subset \PP^{2^{g-1}}$}\}
\end{equation*}
Then Gunning--Welters says that $V_{\Gamma}$ has positive dimension
if and only if $(X, \Theta)$ is a Jacobian.
Moreover, when $V_{\Gamma}$ has positive dimension, it is a smooth curve,
the canonical map $\Jac(V_{\Gamma}) \to X$ is an isomorphism, and $\Gamma$ is contained in $V_{\Gamma}$ (see \cite[Theorem~(0.4)]{Wcriterion}).
Thus $Y$ in Theorem \ref{thm:schottky-A} agrees with $Y_3$ in Proposition \ref{prop:Yd}.

\section{Proof of Theorem \ref{thm:schottky-B}}

Let $X$ be the Jacobian of $C$.
For convencience, we fix an Abel--Jacobi embedding $C\into X$.
First, we shall analyse theta-genericity for finite subschemes of $C$.

Recall the notion of \emph{theta-duality}: whenever $V\subset X$ is a
closed subscheme, we let
\begin{equation*}
T(V) = \{x\in X \st V \subset \Theta_x\}.
\end{equation*}
It has a natural structure as a closed subscheme of $X$ (see \cite[Section 4]{PPminimal} and \cite[Section
2.2]{GL}); the definition as a (closed) subset is sufficient for our present
purpose.

With this notation, theta-genericity means that for all chains of subschemes
\begin{equation}\label{eq:chain}
\Gamma_1\subset \Gamma_2\subset \cdots \subset \Gamma_{g+1}\subset \Gamma,
\end{equation}
where $\Gamma_i$ has degree $i$, the corresponding chain of theta-duals,
\begin{equation*}
T(\Gamma_1)\supset T(\Gamma_2) \supset \cdots \supset T(\Gamma_{g+1}),
\end{equation*}
consists of strict inclusions of sets.

We write $\widehat{\sheaf{F}}$ for the Fourier--Mukai transform \cite{mukai, huybrechts}
of a WIT-sheaf $\sheaf{F}$ on $X$ \cite[Def.~2.3]{mukai}: $\widehat{\sheaf{F}}$ is a sheaf on the dual abelian variety,
which we will identify with $X$ using the principal polarization.

\begin{prop}\label{prop:weierstrass}
Let $\Gamma\subset C$ be a finite subscheme of degree at least $g+1$.
Then $\Gamma$ is theta-general, as a subscheme of $\Jac(C)$, if and only if
$\dim H^0(\O_C(\Gamma_g)) = 1$ for every degree $g$ subscheme $\Gamma_g\subset \Gamma$.
In particular, if $\Gamma$ is supported at a single point $p\in C$,
then $\Gamma$ is theta-general if and only if $p$ is not a Weierstra\ss \ point.
\end{prop}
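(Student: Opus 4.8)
We need to prove: $\Gamma \subset C$ (degree $\geq g+1$) is theta-general in $\text{Jac}(C)$ iff $\dim H^0(\mathcal{O}_C(\Gamma_g)) = 1$ for every degree $g$ subscheme $\Gamma_g \subset \Gamma$. And a corollary for points supported at $p$: theta-general iff $p$ is not Weierstrass.

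**Connecting theta-duality to curve geometry.**

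The key is theta-duality $T(V) = \{x : V \subset \Theta_x\}$. For $\Gamma \subset C$, with $C$ Abel-Jacobi embedded, theta-genericity means that for every chain $\Gamma_1 \subset \cdots \subset \Gamma_{g+1}$, the dual chain $T(\Gamma_1) \supset \cdots \supset T(\Gamma_{g+1})$ has strict inclusions.

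The crucial tool is the Riemann singularity theorem / the relationship between theta divisor and $W_{g-1}$. The theta divisor $\Theta$ is (a translate of) $W_{g-1}$, the image of $C^{(g-1)} \to \text{Jac}(C)$. For an effective divisor, $D \subset \Theta_x$ translates to a condition about effective divisors containing $D$.

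**Computing the dimension of theta-duals.**

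The main computation: for a degree-$d$ subscheme $\Gamma_d \subset C$, the theta-dual $T(\Gamma_d)$ should be related to the image of $|K_C - \Gamma_d|$ or the space of effective divisors. By Riemann-Roch and the structure of $W_{g-1}$, when $d \leq g$, the dimension of $T(\Gamma_d)$ equals $\dim |\Gamma_d|$ — i.e., depends on $h^0(\mathcal{O}_C(\Gamma_d))$. Strict inclusion $T(\Gamma_d) \supsetneq T(\Gamma_{d+1})$ corresponds precisely to $\Gamma_d$ and $\Gamma_{d+1}$ not imposing "redundant" conditions.

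**Key steps of the plan:**

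\begin{enumerate}
\item \emph{Identify theta-duals with linear systems.} Using the Fourier–Mukai transform (as set up in the excerpt) or directly via Riemann's theorem, show that for $\Gamma_d \subset C$ of degree $d \le g$, the theta-dual $T(\Gamma_d)$ is the image in $X$ of an Abel–Jacobi-type map from the residual linear system. Concretely, $x \in T(\Gamma_d)$ iff $\Gamma_d$ lies in the effective divisor $\Theta_x \cap C$, which should translate to $\Gamma_d \le E$ for some effective $E$ in a suitable class, governed by $h^0(\mathcal{O}_C(\Gamma_d))$.

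\item \emph{Dimension count.} Establish that $\dim T(\Gamma_d) = g - 1 - (d - h^0(\mathcal{O}_C(\Gamma_d)) + 1) + (\text{correction})$, or more usefully compute the difference $\dim T(\Gamma_d) - \dim T(\Gamma_{d+1})$ and show the inclusion is strict precisely when adding the extra point imposes an independent condition — equivalently when $h^0$ does not jump inappropriately.

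\item \emph{Reduce to the $\Gamma_g$ condition.} The strictness of \emph{all} inclusions in the chain up to $T(\Gamma_{g+1})$ should collapse to the single numerical condition $\dim H^0(\mathcal{O}_C(\Gamma_g)) = 1$ for every degree-$g$ subscheme $\Gamma_g$. This is because $h^0(\mathcal{O}_C(\Gamma_g)) = 1$ (together with the general position of smaller pieces, which is automatic on a curve) forces each step to drop dimension by exactly one.

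\item \emph{Weierstrass corollary.} For $\Gamma$ supported at a single point $p$ of degree $g$, $h^0(\mathcal{O}_C(gp)) = 1$ iff $gp$ is nonspecial iff $p$ is not a Weierstrass point (by definition of Weierstrass points via the gap sequence / special divisors $gp$).
\end{enumerate}

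**The scheme-theoretic subtlety (main obstacle).**

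The hard part is handling \emph{nonreduced} $\Gamma$ scheme-theoretically. For reduced points the interpretation via $W_{g-1}$ and Riemann-Roch is classical, but when $\Gamma$ is supported at a single point $p$ with multiplicity, we need the theta-dual $T(\Gamma_d)$ and the linear system $|\Gamma_d|$ to behave correctly for the divisor $dp$ (osculating/principal parts). The identification of $T(\Gamma_d)$ with the appropriate degeneracy locus must be done carefully using the scheme structure from \cite{GL}, and the Fourier–Mukai machinery is presumably what makes this rigorous. I expect the main technical work to be showing that the set-theoretic dimension drop in the dual chain is controlled exactly by $h^0(\mathcal{O}_C(\Gamma_g))$, including the infinitesimal (Weierstrass) case where the condition $h^0(\mathcal{O}_C(gp)) = 1$ is the sharp boundary.
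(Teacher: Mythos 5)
There is a genuine gap, and it sits exactly where you wrote ``the Fourier--Mukai machinery is presumably what makes this rigorous'': that deferred step is the proof, not a detail. Your positive direction (steps 2--3) proposes to compute $\dim T(\Gamma_d)$ in terms of $h^0(\O_C(\Gamma_d))$ and to get strictness of the chain $T(\Gamma_1)\supset\cdots\supset T(\Gamma_{g+1})$ from a stepwise dimension drop. This cannot work as stated: for every $d\ge 2$ the theta-dual $T(\Gamma_d)$ contains the entire locus $T(C)=\{x : C\subset \Theta_x\}$, which is a translate of a copy of $W_{g-2}$ and has dimension $g-2$. So for $g\ge 4$ the sets $T(\Gamma_2),\dots,T(\Gamma_{g+1})$ all have dimension $g-2$, no dimension drops occur, and strictness of the inclusions is invisible to any global dimension count; in particular your criterion ``strict precisely when the extra point imposes an independent condition'' at intermediate levels $d$ is not the right mechanism (the proposition's hypothesis only concerns degree-$g$ subschemes, and intermediate strictness is not governed by $h^0(\O_C(\Gamma_d))$). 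The paper instead argues \emph{locally}: by Jacobi inversion in its precise form --- $h^0(\O_C(\Gamma_g))=1$ if and only if $\Gamma_g=C\cap\Theta_x$, and then $x$ is \emph{unique} (a uniqueness you never state but crucially need) --- one gets $T(\Gamma_g)\setminus T(\Gamma_{g+1})=\{x\}$, hence a neighbourhood $U$ of $x$ with $T(\Gamma_g)\cap U=\{x\}$. Applying the Fourier--Mukai functor to $0\to \I_{\Gamma_i}(\Theta)\to \O_X(\Theta)\to \O_{\Gamma_i}\to 0$ produces a section $F_i$ of a rank-$i$ locally free sheaf with vanishing locus $T(\Gamma_i)$, and compatible trivializations give $F_i=(f_1,\dots,f_i)$; since $T(\Gamma_g)\cap U$ has codimension $g$ and is cut by $g$ functions, Krull's height theorem forces $T(\Gamma_i)\cap U$ to have codimension exactly $i$ for every $i$, which is what makes all inclusions strict. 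Nothing in your sketch produces these compatible local equations or replaces them.

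The converse direction is also missing. Your step 1 asserts that $x\in T(\Gamma_d)$ if and only if $\Gamma_d$ lies in the effective divisor $\Theta_x\cap C$, but this overlooks precisely the degenerate case $C\subseteq\Theta_x$ (possible since $W_1+W_{g-2}\subseteq W_{g-1}$), and that case drives the ``only if'' implication in the paper: if some degree-$g$ subscheme $\Gamma_g\subset\Gamma$ has $h^0(\O_C(\Gamma_g))\ge 2$, then by Jacobi inversion $\Gamma_g$ is not of the form $C\cap\Theta_x$, so every theta-translate containing $\Gamma_g$ must contain all of $C$, whence $T(\Gamma_g)=T(\Gamma_{g+1})$ and theta-generality fails. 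Your proposal contains no argument for this implication. Two smaller remarks: the Weierstra\ss{} consequence (your step 4) is correct and agrees with the paper; and the ``scheme-theoretic subtlety'' you flag as the main obstacle is in fact handled uniformly --- the paper's argument never distinguishes reduced from nonreduced $\Gamma$, because the Fourier--Mukai construction of the equations $f_1,\dots,f_i$ works verbatim for arbitrary finite subschemes, so no separate osculating/principal-parts analysis is needed.
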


\begin{proof}
For the last claim, note that the condition $\dim H^0(\O_C(gp)) > 1$ says precisely that $p$
is a Weierstra\ss \ point.

For any effective divisor $\Gamma_g \subset C$ degree $g$, it is well known that $\dim H^0(\O_C(\Gamma_g)) = 1$
if and only if $\Gamma_g$ can be written as the intersection of $C \subset \Jac(C)$ and a $\Theta$-translate
(this is one formulation of Jacobi inversion).
If this is the case, then the point $x\in X$ satisfying $\Gamma_g = C\cap \Theta_x$ is unique.

Consider a chain \eqref{eq:chain}.
If there is a degree $g$ subscheme $\Gamma_g\subset \Gamma$ not of the
form $C\cap \Theta_x$, then every $\Theta$-translate containing $\Gamma_g$ also contains $C$,
and in particular $T(\Gamma_g) = T(\Gamma_{g+1})$. Hence $\Gamma$ is not theta-general.

Suppose, on the other hand, that $\Gamma_g$
is of the form $C\cap \Theta_x$. Then $T(\Gamma_g)\setminus T(\Gamma_{g+1})$ consists (as a set)
of exactly the point $x$.
Thus there is a Zariski open neighbourhood $U\subset X$ of $x$ such that $T(\Gamma_g) \cap U = \{x\}$.
We claim that, for a possibly smaller neighbourhood $U$, there are regular functions
$f_1,\dots, f_g \in \O_X(U)$, such that $T(\Gamma_i) \cap U = V(f_1,\dots, f_i)$ for all $i$:
in fact, apply the Fourier--Mukai functor to the short exact sequence
\begin{equation*}
0\to \sheaf{I}_{\Gamma_i}(\Theta) \to \O_X(\Theta) \to \O_{\Gamma_i} \to 0
\end{equation*}
to obtain
\begin{equation*}
0 \to \O_X(-\Theta) \xrightarrow{F_i} \widehat{\O_{\Gamma_i}} \to \widehat{\sheaf{I}_{\Gamma_i}(\Theta)} \to 0.
\end{equation*}
Then $F_i$ is a section of a locally free sheaf of rank $i$, and its vanishing locus is exactly $T(\Gamma_i)$.
Choose trivializations of $\widehat{\O_{\Gamma_i}}$ over $U$ for all $i$ compatibly, in the sense that the
surjections $\widehat{\O_{\Gamma_{i+1}}} \to \widehat{\O_{\Gamma_i}}$ correspond to projection to the first $i$ factors.
Then $F_i = (f_1,\dots, f_i)$ in these trivializations.

As $T(\Gamma_g)\cap U$ is zero dimensional,
it follows that each $T(\Gamma_i)\cap U$ has codimension $i$ in $U$.
Hence all the inclusions $T(\Gamma_i) \supset T(\Gamma_{i+1})$ are strict, and so $\Gamma$ is
theta-general.
\end{proof}

Now we can compare the locus $Y$ in Theorem \ref{thm:schottky-B} with $Y_{g+2}$ in Proposition \ref{prop:Yd}
by means of the Castelnuovo--Schottky theorem:

\begin{cor}[of Theorem \ref{thm:castelnuovo-schottky}]
Let $\Gamma\in\Hilb^{g+2}(X)$ be theta-general and supported at $0\in X$.
Then $\Gamma$ in the locus $Y_{g+2}$ in Propsition \ref{prop:Yd} if and only
if it is in special position with respect to $2\Theta$-translates.
\end{cor}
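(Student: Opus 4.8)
The plan is to prove the Corollary by showing both implications, using Theorem \ref{thm:castelnuovo-schottky} for the hard direction and a direct computation for the easy one. Recall that by definition $\Gamma\in Y_{g+2}$ means there is a smooth curve $C'\subset X$ containing $\Gamma$ with $\Jac(C')\to X$ an isomorphism of \ppav{}s; equivalently, $\Gamma$ lies on an Abel--Jacobi curve through $0$.

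First I would prove the forward direction (in $Y_{g+2}$ implies special position). Suppose $\Gamma$ lies on an Abel--Jacobi image of a curve $C$. The key input is that a degree $g+2$ subscheme of $C\subset X$ fails to impose independent conditions on $|2\Theta_x|$ for every $x$. More precisely, for a curve $C$ in its Jacobian, the restriction $H^0(X,\O_X(2\Theta_x))\to H^0(C,\O_C(2\Theta_x))$ lands in a space governed by $\O_C(2\Theta_x)$, which is a line bundle on $C$ of degree $2g$; hence $h^0(C,\O_C(2\Theta_x|_C))\le g+1$ by Riemann--Roch (it equals $g+1$ when the bundle is nonspecial, i.e.\ generically). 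Therefore the image of $H^0(X,\O_X(2\Theta_x))$ in $H^0(\O_\Gamma(2\Theta_x))$ has dimension at most $g+1 < g+2 = \deg\Gamma$, which forces the codimension of $H^0(\I_\Gamma(2\Theta_x))$ in $H^0(\O_X(2\Theta_x))$ to be strictly less than $\deg\Gamma$ for every $x$. This is exactly the special position condition, so this direction reduces to a Riemann--Roch bound on $C$ and should be routine.

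For the converse (special position plus theta-general implies in $Y_{g+2}$), the plan is to invoke Theorem \ref{thm:castelnuovo-schottky} directly. By hypothesis $\Gamma$ has degree $g+2$, is theta-general, and is in special position with respect to $2\Theta$-translates; these are precisely the hypotheses of the Castelnuovo--Schottky theorem. The conclusion furnishes a smooth curve $C$ and an isomorphism $\Jac(C)\cong X$ of \ppav{}s such that $\Gamma$ is contained in the image of $C$ under an Abel--Jacobi embedding. Since $\Gamma$ is supported at $0$, this Abel--Jacobi curve passes through $0$, and it satisfies condition (ii) in the definition of $Y_{g+2}$ by construction. Hence $\Gamma\in Y_{g+2}$.

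The main obstacle, and the only point requiring care, is matching the hypotheses precisely: Theorem \ref{thm:castelnuovo-schottky} is stated for subschemes that are theta-general, which by the definition in the introduction requires $\Gamma$ to have degree \emph{at least} $g+1$. Here $\deg\Gamma=g+2$, so theta-genericity is meaningful and the hypotheses transfer verbatim. I would also remark that in the forward direction one uses that $\Gamma$ is supported at $0$ only to ensure the curve passes through the origin; the special position estimate itself holds for any $\Gamma$ lying on an Abel--Jacobi curve. No delicate scheme-theoretic argument beyond the cited theorem is needed, as the scheme-theoretic extension in \cite{GL} already handles nonreduced $\Gamma$.
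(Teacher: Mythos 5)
Your proposal follows the paper's proof almost exactly: the hard direction is an immediate application of Theorem \ref{thm:castelnuovo-schottky} (whose hypotheses match verbatim, as you check), and the easy direction bounds the codimension of $H^0(\I_\Gamma(2\Theta_x))$ in $H^0(\O_X(2\Theta_x))$ by factoring restriction through the Abel--Jacobi curve, exactly as in the paper --- which likewise observes that this direction needs no theta-genericity. One justification needs correcting, however. Riemann--Roch gives $h^0(\O_C(2\Theta_x)) = g+1+h^1(\O_C(2\Theta_x)) \ge g+1$, a \emph{lower} bound, not the upper bound $\le g+1$ you assert; and your parenthetical that equality holds only ``generically'' (when the bundle is nonspecial) would, if taken literally, ruin the argument, since special position demands the estimate for \emph{every} $x\in X$, and a special bundle would have $h^0 > g+1$, not less. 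The fix is immediate: $\O_C(2\Theta_x)$ has degree $2g > 2g-2 = \deg K_C$, so $h^1(\O_C(2\Theta_x))=0$ for every $x$, whence $h^0(\O_C(2\Theta_x)) = g+1$ identically --- this is precisely the equality $\dim H^0(\O_C(2\Theta_x)) = g+1$ that the paper's proof uses. With that repair, your argument is correct and coincides with the paper's.
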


\begin{proof}
Theorem \ref{thm:castelnuovo-schottky} immediately shows that if $\Gamma$ is
in special position with respect to $2\Theta$-translates,
then $\Gamma\in Y_{g+2}$.

The converse is straight forward, and does not require the theta-genericity assumption.
Indeed, we use that any curve $C'\subset X$ for which $\Jac(C') \to X$ is an isomorphism is of the form
$\pm C_p$ for some $p\in X$ and we claim that if $\Gamma \subset \pm C_p$, then $\Gamma$ is in special position with respect to $2\Theta$-translates.
For ease of notation, we rename $\pm C_p$ as $C$, so that $\Gamma\subset C$.
Then $H^0(\sheaf{I}_C(2\Theta_x)) \subset H^0(\sheaf{I}_\Gamma(2\Theta_x))$, and
the exact sequence
\begin{equation*}
0 \to
H^0(\sheaf{I}_C(2\Theta_x))\to
H^0(\O_X(2\Theta_x))\to
H^0(\O_C(2\Theta_x))
\end{equation*}
shows that already the codimension of $H^0(\sheaf{I}_C(2\Theta_x))$ in $H^0(\O_X(2\Theta_x))$
is at most $\dim H^0(\O_C(2\Theta_x)) = g+1$.
\end{proof}

Theorem \ref{thm:schottky-B} now follows: 
The set $Y$ defined there agrees with the theta-general elements in $Y_{g+2}$, by the Corollary.
By Proposition \ref{prop:weierstrass},
$\Gamma = \phi_{\pm}(p)$ is theta-general if and only if the supporting point $0$
of $\Gamma$ is not Weierstra\ss{} in $\pm C_p$, i.e.\ $p\in C$
is not Weierstra\ss.

\section{Historical remark}

Assume $C$ is not hyperelliptic.
Then $C_p\cap(-C_p)$ is a finite subscheme of degree $2$ supported at $0$.
Thus, for $d=2$, we have $\phi_+=\phi_-$, and the argument in Lemma \ref{lem:phi}
shows that $\phi_+$ is an isomorphism from $C$ onto $Y_2$.
If $C$ is hyperelliptic with hyperelliptic involution $\iota$, however, we find
that $\phi_+$ factors through $C/\iota \iso \PP^1$, and $Y_2 \iso \PP^1$,
and we cannot reconstruct $C$ from $Y_2$ alone.

In the nonhyperelliptic situation, it is well known that the curve $C$ can be reconstructed
as the projectivized tangent cone to the surface $C-C\subset X$ at $0$.
This projectivized tangent cone is exactly $Y_2$ (when we identify the projectivized tangent space
to $X$ at $0$ with the closed subset of $\Hilb^2(X)$ consisting of nonreduced degree $2$ subschemes supported at $0$).
To quote Mumford
\cite{Mcurves}: ``If $C$ is hyperelliptic, other arguments are needed.'' In
the present note, these other arguments are to increase $d$!

\bibliographystyle{plain}
\bibliography{hilbjac}

\begin{thebibliography}{10}

\bibitem{Bschottky}
A.~Beauville.
\newblock Le probl\`eme de {S}chottky et la conjecture de {N}ovikov.
\newblock {\em Ast\'erisque}, 152--153:101--112, 1987.
\newblock Exp.~675 du S{\'e}minaire Bourbaki.

\bibitem{Dschottky}
O.~Debarre.
\newblock The {S}chottky problem: an update.
\newblock In {\em Current topics in complex algebraic geometry ({B}erkeley,
  {CA}, 1992/93)}, volume~28 of {\em Math. Sci. Res. Inst. Publ.}, pages
  57--64. Cambridge Univ. Press, Cambridge, 1995.

\bibitem{grushevsky}
S.~Grushevsky.
\newblock Cubic equations for the hyperelliptic locus.
\newblock {\em Asian J. Math.}, 8(1):161--172, 2004.

\bibitem{grushevsky-err}
S.~Grushevsky.
\newblock Erratum to ``{C}ubic equations for the hyperelliptic locus''.
\newblock {\em Asian J. Math.}, 9(2):273--274, 2005.

\bibitem{GL}
M.~G. Gulbrandsen and M.~Lahoz.
\newblock Finite subschemes of abelian varieties and the {S}chottky problem.
\newblock {\em Ann. Inst. Fourier (Grenoble)}, 61(5):2039--2064, 2011.

\bibitem{gunning}
R.~C. Gunning.
\newblock Some curves in abelian varieties.
\newblock {\em Invent. Math.}, 66(3):377--389, 1982.

\bibitem{huybrechts}
D.~Huybrechts.
\newblock {\em Fourier-{M}ukai transforms in algebraic geometry}.
\newblock Oxford Mathematical Monographs. The Clarendon Press Oxford University
  Press, Oxford, 2006.

\bibitem{krichever}
I.~Krichever.
\newblock Characterizing {J}acobians via trisecants of the {K}ummer variety.
\newblock {\em {Ann. Math. (2)}}, 172(1):485--516, 2010.

\bibitem{mukai}
S.~Mukai.
\newblock Duality between {$D(X)$} and {$D(\hat X)$} with its application to
  {P}icard sheaves.
\newblock {\em Nagoya Math. J.}, 81:153--175, 1981.

\bibitem{Mcurves}
D.~Mumford.
\newblock {\em Curves and their {J}acobians}.
\newblock The University of Michigan Press, Ann Arbor, Mich., 1975.

\bibitem{PPschottky}
G.~Pareschi and M.~Popa.
\newblock Castelnuovo theory and the geometric {S}chottky problem.
\newblock {\em J. Reine Angew. Math.}, 615:25--44, 2008.

\bibitem{PPminimal}
G.~Pareschi and M.~Popa.
\newblock Generic vanishing and minimal cohomology classes on abelian
  varieties.
\newblock {\em Math. Ann.}, 340(1):209--222, 2008.

\bibitem{Wflexes}
G.~E. Welters.
\newblock On flexes of the {K}ummer variety (note on a theorem of {R}. {C}.
  {G}unning).
\newblock {\em Nederl. Akad. Wetensch. Indag. Math.}, 45(4):501--520, 1983.

\bibitem{Wcriterion}
G.~E. Welters.
\newblock A criterion for {J}acobi varieties.
\newblock {\em Ann. of Math. (2)}, 120(3):497--504, 1984.

\end{thebibliography}

\end{document}